\newtheorem{theorem}{Theorem}[section]
\theoremstyle{definition}
\newtheorem{definition}[theorem]{Definition}
\newtheorem{remark}[theorem]{Remark}
\newtheorem*{conjecture}{Conjecture}
\theoremstyle{plain}
\newcounter{theoremintro}
\newtheorem{theoremi}[theoremintro]{Theorem}
\newtheorem{corollaryi}[theoremintro]{Corollary}
\newcommand{\Zb}{{\mathbb Z}}
\newcommand{\eps}{\varepsilon}
\newcommand{\abs}[1]{\left|#1\right|}
\newcommand{\brak}[1]{\left(#1\right)}
\DeclarePairedDelimiter\ceil{\lceil}{\rceil}
\DeclarePairedDelimiter\floor{\lfloor}{\rfloor}
\newcommand{\sqbr}[1]{\left[#1\right]}
\numberwithin{equation}{section}
\begin{document}

\title{Group extensions preserve almost finiteness}

\author{Petr Naryshkin}
\address{Petr Naryshkin,
Mathematisches Institut,
WWU M{\"u}nster, 
Einsteinstr.\ 62, 
48149 M{\"u}nster, Germany}
\email{pnaryshk@uni-muenster.de}


\begin{abstract}
We show that a free action $G \curvearrowright X$ is almost finite if its restriction to some infinite normal subgroup of $G$ is almost finite. Consider the class of groups which contains all infinite groups of locally subexponential growth and is closed under taking direct limits and extensions on the right by any amenable group. It follows that all free actions of a group from this class on finite-dimensional spaces are almost finite and therefore that minimal such actions give rise to classifiable crossed products. In particular, that gives a much easier proof for the recent result of Kerr and the author on elementary amenable groups.
\end{abstract}

\date{\today}

\maketitle

\section{Introduction}

When it comes to actions of discrete amenable groups, the Ornstein-Weiss tiling machinery is one of the most useful tools. Being able to approximate the space with a finite collection of Rokhlin towers provides a lot of information on the structure of the action. In the context of ergodic theory some of the applications are to the theory of orbit equivalence (originally these techniques were used by Ornstein and Weiss to extend Dye's theorem to all amenable groups) and the structure of the von Neumann crossed products. However, the classic tiling technique allows one only to approximate most of the space. In the context of topological dynamics, this is often not enough: instead, one would like to find an \emph{exact} partition of the space. In the Borel setting, this is tightly linked to the question of hyperfinite equivalence relations (see \cite{ConJacMarSewTuc20}). In the zero-dimensional setting, this leads to almost finiteness.

Suppose $G \curvearrowright X$ is an action of a discrete countable amenable group on a zero-dimensional compact metrizable space. We say that it is \emph{almost finite} if for every finite subset $K$ of the group and every $\eps>0$ there is a finite collection $\{(S_i, V_i)\}_{i=1}^n$ such that $V_i$ are clopen subsets of $X$, $S_i$ are $(K, \eps)$-invariant subsets (meaning that $\abs{S_i \Delta KS_i} < \eps \abs{S_i}$) of the group and 
$$X = \bigsqcup_{i=1}^n\bigsqcup_{g \in S_i}gV_i.$$
This was originally introduced by Matui \cite{Mat12} (in fact, for a more general case of groupoids) and later extended in an appropriate way by Kerr \cite{Ker20} to actions on spaces of arbitrary (including infinite) dimension (see Definition \ref{AFdef}).

Currently, the main application of this notion is to the classification theory of C$^*$-algebras. Specifically, to every topological dynamical system $G \curvearrowright X$ one can associate a C$^*$-algebra $C(X) \rtimes G$, which is called the crossed product of the system. The algebraic invariants of such C$^*$-algebras have been of interest for a long time (starting with the works such as \cite{Put90} and \cite{EllEva93}) and with the advent of the Elliott classification program, much attention (see, for instance, \cite{LinPhi10}, \cite{TomWin13} and \cite{Sza15}) has been devoted to understanding which crossed products fit into its scope (``are classifiable''). In the same paper \cite{Ker20}, Kerr proved that under some natural assumptions almost finiteness is a sufficient criterion for the crossed product to be classifiable. This allowed for the treatment of many systems which were inaccessible with the prior methods. In fact, it is believed that the following conjecture is true.

\begin{conjecture}
Every free action of a countably infinite discrete amenable group on a finite-dimensional compact metrizable space is almost finite.
\end{conjecture}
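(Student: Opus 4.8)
The plan is to reduce the conjecture to a single structural property and then attack that property by induction on the structure of $G$. The natural first move is to invoke the dichotomy of Kerr and Szab\'o: for a free action of a countably infinite amenable group on a \emph{finite-dimensional} compact metrizable space the small boundary property holds automatically, and for any free action with the small boundary property almost finiteness is equivalent to \emph{dynamical comparison} (the condition that whenever nonempty open sets $U,V$ satisfy $\mu(U) < \mu(V)$ for every $G$-invariant Borel probability measure $\mu$, the set $U$ is dynamically subequivalent to $V$). Thus the conjecture is equivalent to the assertion that every free action $G \curvearrowright X$ of a countably infinite amenable group on a finite-dimensional space has dynamical comparison, and I would work entirely with comparison from this point on.

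Second, I would establish comparison for a manageable base class and then propagate it. As a base case one takes the groups of locally subexponential growth: for these the boundaries of F\o lner sets are negligible relative to the group, so the quasitilings coming out of the Ornstein--Weiss machinery have low complexity and can be upgraded, by a counting-and-matching argument inside each tile, to the nearly-exact Rokhlin tower decompositions that verify comparison. To enlarge this class I would use the main theorem of the present paper, namely that a free action is almost finite as soon as its restriction to some infinite normal subgroup is. Given an extension $1 \to N \to G \to Q \to 1$ with $Q$ amenable and $N$ a group already handled, the restricted action $N \curvearrowright X$ is almost finite by the inductive hypothesis, and the theorem then yields almost finiteness of $G \curvearrowright X$; a separate and easier argument closes the class under increasing unions (direct limits). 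This is precisely the inductive scheme that produces the class described in the abstract, and it already subsumes all elementary amenable groups and a good deal more.

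The hard part is that this induction does not reach \emph{every} countable amenable group. Groups of intermediate growth, such as the Grigorchuk groups, are of subexponential growth and hence are base cases, but there exist amenable groups of exponential growth that are neither built from subexponential-growth groups by extensions and direct limits nor accessible by any present technique. The fundamental obstruction, already flagged in the introduction, is that the classical tiling machinery only covers \emph{most} of the space, whereas comparison demands an \emph{exact} allocation of orbit-pieces; for a group with no usable structural decomposition there is at present no way to convert approximate quasitilings into the exact subequivalences that comparison requires, uniformly over all free actions, because the local marriage-type combinatorics used in the subexponential case break down once F\o lner sets carry exponentially much boundary. Removing this obstruction---either by a tiling argument valid for arbitrary amenable $G$, or by substantially enlarging the base class---is where I expect the genuine difficulty to lie, and it is exactly why the statement remains a conjecture rather than a theorem.
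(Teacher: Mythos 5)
The statement you were asked to prove is labelled a \emph{conjecture} in the paper, and the paper does not prove it: its actual results (Theorem A and Corollaries B and C) establish the conjecture only for the class of groups generated from infinite groups of locally subexponential growth by countable direct limits and extensions on the right by amenable groups. Your text is, correctly, not a proof either, and your assessment matches the state of the paper. The first two paragraphs accurately reconstruct exactly the partial progress the paper contains: the reduction to dynamical comparison via the small boundary property (which does hold automatically for free actions on finite-dimensional spaces by Kerr--Szab\'o, and which the paper uses in the same way in the proof of Theorem A), the base case of locally subexponential growth imported from Kerr--Szab\'o and Downarowicz--Zhang, and the inductive step over extensions supplied by the paper's Theorem A --- noting that the theorem requires the normal subgroup to be \emph{infinite}, which your scheme satisfies since every group in the base class is infinite. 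Your final paragraph correctly identifies the obstruction: this induction does not reach an arbitrary countable amenable group, and no argument in the paper (or anywhere else at present) converts approximate Ornstein--Weiss quasitilings into the exact subequivalences that comparison demands without some structural hypothesis on $G$. In short, there is no hidden gap beyond the one you already name; the statement remains open, and what you have written is an accurate account of why, consistent with the paper's own framing.
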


By now this has been verified for a large class of amenable groups. Kerr and Szabo showed in \cite{KerSza20} (based on \cite{DowZha17}) that the conjecture holds for groups of locally subexponential growth. Later it was established for groups which admit normal series of a certain form in \cite{ConJacMarSewTuc20}, and, more recently, for all elementary amenable groups in \cite{KerNar21}. The proof of the latter goes as follows. It is clear from the definition that the class of groups satisfying the conjecture is closed under taking direct limits and it is also not hard to show that it is closed under taking extensions on the right by finite groups. The bulk of work goes into showing that this class is also closed under extensions on the right by $\Zb$. Finally, by the result of Osin \cite{Osi02}, every elementary amenable group can be obtained from the trivial group using these three operation, which concludes the proof.

The main result of this paper is the following.
\begin{theoremi}
\label{MainThm}
Suppose $G \curvearrowright X$ is a free action of a countable discrete amenable group on a compact metrizable space, $H \lhd G$ is an infinite normal subgroup and the restricted action $H \curvearrowright X$ is almost finite. Then $G \curvearrowright X$ is almost finite.
\end{theoremi}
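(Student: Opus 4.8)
The plan is to deduce almost finiteness of $G \curvearrowright X$ from its two constituent ingredients and to transfer each from $H$ to $G$ separately. Recall that, for a free action of a countably infinite amenable group, almost finiteness (Definition \ref{AFdef}) is equivalent to the conjunction of \emph{dynamical comparison} and \emph{almost finiteness in measure} \cite{Ker20, KerSza20}; the latter asserts that for every finite $K \subseteq G$ and $\eps > 0$ there is an open castle with $(K,\eps)$-invariant shapes whose support has measure greater than $1 - \eps$ uniformly over all $G$-invariant Borel probability measures. Since $H \lhd G$ with $G$ amenable, the quotient $Q = G/H$ is a countable amenable group, and I would use very invariant finite subsets (Følner sets) of $Q$ together with a transversal lifting $Q \to G$ to pass from $H$-structures to $G$-structures. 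Throughout, the infinitude of $H$ is what guarantees $H$-Følner sets of unbounded cardinality, hence arbitrarily fine $H$-castles.

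First I would transfer comparison, which I expect to be the softer, essentially measure-theoretic step. The $G$-invariant measures are exactly the $Q$-fixed points among the $H$-invariant ones, on which $G$ acts through $Q$. Given a closed set $A$ and an open set $B$ with $\mu(A) < \mu(B)$ for every $G$-invariant $\mu$, upper/lower semicontinuity and compactness of the (nonempty) simplex of $G$-invariant measures yield a uniform gap $\mu(A) + \beta < \mu(B)$. Averaging an arbitrary $H$-invariant measure $\nu$ over a sufficiently invariant $Q$-Følner set $\bar F$ produces a nearly $G$-invariant measure, and a weak$^*$-compactness argument upgrades the $G$-gap to the averaged bound $\sum_{t \in \bar F}\nu(t^{-1}A) < \sum_{t\in \bar F}\nu(t^{-1}B)$ holding uniformly over all $H$-invariant $\nu$. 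Converting this to a subequivalence through $H$-comparison (in its form permitting controlled multiplicities of the target) gives $A \prec B$; note that $\prec_H$ implies $\prec_G$ since $G$ supplies more translates.

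The main work, and the expected chief obstacle, is transferring almost finiteness in measure. Here I would fix $K \subseteq G$ and $\eps > 0$, choose a highly $(\bar K, \eps)$-invariant Ornstein--Weiss (quasi-)tiling of $Q$ and lift its tiles to transversal sets $T \subseteq G$, and then invoke $H$-almost-finiteness to obtain an $H$-castle that is both very $H$-invariant and, crucially, far \emph{finer} than the scale of $T$. The naive attempt to use product shapes $T \cdot S_i$ fails: because $G$ is only an extension and not a product, left translation by $K$ twists the $H$-fibres by a cocycle $c(k,t) \in H$ whose range grows with $T$, so $T \cdot S_i$ is in general not $(K,\eps)$-invariant. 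To circumvent this I would instead take genuine $(K,\eps)$-invariant $G$-Følner shapes (available by amenability of $G$) and build the castle by cutting each $G$-tile along the levels of the fine $H$-castle; the cocycle mismatch between neighbouring $Q$-cosets is then confined to a boundary region of small measure, which is discarded. The two points to verify are that the surviving $G$-levels are pairwise disjoint and that their support has measure close to $1$ uniformly over $G$-invariant measures, the disjointness across distinct $Q$-cosets being exactly where freeness of the action and the fineness of the $H$-castle must be combined.

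Assembling these, comparison together with almost finiteness in measure for $G \curvearrowright X$ yields almost finiteness by the cited characterization. I expect the disjointness-across-cosets bookkeeping in the third step to be the crux: it is the point at which the extension's twisting genuinely enters, and it is precisely this phenomenon that the earlier $\Zb$-by-$\Zb$ arguments of \cite{KerNar21} had to treat by hand, whereas here it is organized once and for all through the quotient $Q$.
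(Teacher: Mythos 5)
Your overall architecture --- splitting almost finiteness into comparison plus almost finiteness in measure and transferring each from $H$ to $G$ separately --- is workable in principle, and your first half is in the right spirit: the paper also proves comparison for $G\curvearrowright X$ by averaging the measure inequality over a F{\o}lner set and then invoking comparison on multisets for the $H$-action. To actually close that step you need the concrete mechanism the paper supplies: take a fine $H$-castle whose remainder has small measure, refine it so that each level lies entirely in or out of $A$ and $B$, write $A\precsim_G \sum_j a_j[V_j]+[R^\eps]$, round the multiplicities $a_j$ up to multiples of $n=|F|$ so that the type-semigroup relation $[gV_j]=[V_j]$ lets you re-express the multiset as $\sum_j\sum_{g\in F}\lceil a_j/n\rceil[gV_j]$, and only then compare $H$-invariant measures of the two multisets; your phrase about $H$-comparison ``permitting controlled multiplicities of the target'' gestures at this but does not supply it.

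The genuine gap is in your second half. You treat the transfer of almost finiteness in measure as the main work and propose to build a $G$-castle by quasi-tiling the quotient $Q=G/H$, lifting transversals, and controlling the resulting cocycle twisting --- and you yourself leave the decisive verification (disjointness of levels across cosets) unproven, calling it the crux. That construction is exactly the hard road that \cite{KerNar21} had to take for $\Zb$-extensions, and it is far from clear that it can be carried out for a general amenable quotient without substantial new ideas. The point you are missing is that no such construction is needed: by \cite[Theorem A]{KerSza20}, almost finiteness in measure is equivalent to the small boundary property, which is a statement about a base of open sets whose boundaries are $\mu$-null for every invariant measure $\mu$. Since every $G$-invariant measure is in particular $H$-invariant, the base witnessing the small boundary property for $H\curvearrowright X$ witnesses it for $G\curvearrowright X$ verbatim, so this half of the theorem transfers for free and the whole problem reduces to transferring comparison --- which is why the paper's proof is short. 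As written, your proposal does not constitute a proof: its hardest step is both unnecessary and incomplete.
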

Note that in particular this covers both extensions on the right by finite groups and by $\Zb$ (unless $H$ is finite, in which case the conclusion follows from \cite{KerSza20}) and therefore gives a simple new proof of the result in \cite{KerNar21}. However, it also immediately provides a striking class of examples not covered by the prior methods. 

\begin{corollaryi}
\label{GxZ}
Let $G$ be an amenable group. Then every free action of the group $G \times \Zb$ on a finite-dimensional space is almost finite.
\end{corollaryi}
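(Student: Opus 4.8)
The plan is to deduce this directly from Theorem \ref{MainThm} by peeling off the $\Zb$ factor. Concretely, I would set $H = \{1_G\} \times \Zb$, which is the copy of $\Zb$ sitting inside $G \times \Zb$. This $H$ is central in $G \times \Zb$, hence in particular a normal subgroup, and it is infinite. Moreover, since $G \times \Zb$ is a product of two countable discrete amenable groups it is again countable, discrete and amenable, so the hypotheses that Theorem \ref{MainThm} places on the ambient group are satisfied.

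Next I would observe that freeness is inherited by subgroups: as the given action $(G \times \Zb) \curvearrowright X$ is free, its restriction to $H \cong \Zb$ is a free action on the same finite-dimensional compact metrizable space $X$. The only remaining point is to verify that this restricted action $\Zb \curvearrowright X$ is almost finite, and this is precisely where the known base case enters. The group $\Zb$ has polynomial, and in particular locally subexponential, growth, so by the theorem of Kerr and Szab\'o \cite{KerSza20} every free action of it on a finite-dimensional compact metrizable space is almost finite. Hence $H \curvearrowright X$ is almost finite.

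With these pieces in place the conclusion is immediate: $H \lhd G \times \Zb$ is an infinite normal subgroup and the restricted action $H \curvearrowright X$ is almost finite, so Theorem \ref{MainThm} yields that $(G \times \Zb) \curvearrowright X$ is almost finite. I do not expect any genuine obstacle inside the corollary itself; all of the difficulty is absorbed into Theorem \ref{MainThm} on the one hand and into the Kerr--Szab\'o base case on the other. The only facts one must check are the entirely routine ones that freeness and finite-dimensionality are preserved by the restricted system, and that $\Zb$ indeed lies in the class of groups of locally subexponential growth covered by \cite{KerSza20}.
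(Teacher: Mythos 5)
Your proposal is correct and is essentially the paper's own argument: the paper likewise takes the normal subgroup $\Zb \lhd G \times \Zb$, invokes the Kerr--Szab{\'o} result (based on Downarowicz--Zhang) for free actions of infinite groups of locally subexponential growth on finite-dimensional spaces to get almost finiteness of the restricted $\Zb$-action, and then applies Theorem \ref{MainThm}. The routine verifications you mention (normality, infiniteness, freeness of the restriction) are exactly the ones needed, so there is nothing to add.
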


More generally, we can describe the largest class of groups that are known to satisfy the conjecture as follows.
\begin{corollaryi}
\label{MainCor}
Consider the smallest class of groups which contains all infinite groups of locally subexponential growth and is closed under taking countable direct limits and extensions on the right by an amenable group. Then every free action of a group from this class on a finite-dimensional space is almost finite.
\end{corollaryi}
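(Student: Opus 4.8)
The plan is to argue by induction on the structure of the class, which I will denote $\mathcal{C}$. Let $\mathcal{D}$ be the class of all countable \emph{infinite} amenable groups $G$ with the property that every free action of $G$ on a finite-dimensional compact metrizable space is almost finite. Since $\mathcal{C}$ is by definition the \emph{smallest} class that contains all infinite groups of locally subexponential growth and is closed under countable direct limits and extensions on the right by amenable groups, it suffices to show that $\mathcal{D}$ contains the first family and is closed under the latter two operations. Minimality of $\mathcal{C}$ then forces $\mathcal{C} \subseteq \mathcal{D}$, which is exactly the assertion (and, as a byproduct, records that every group in $\mathcal{C}$ is infinite).

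For the base case, every infinite group of locally subexponential growth is amenable, and by the theorem of Kerr and Szab\'o \cite{KerSza20} all of its free actions on finite-dimensional spaces are almost finite; hence it lies in $\mathcal{D}$. For closure under countable direct limits, let $G = \varinjlim G_n$ with each $G_n \in \mathcal{D}$. Then $G$ is again countable, amenable, and infinite. Given a free action $G \curvearrowright X$ on a finite-dimensional space together with a finite set $K \subseteq G$ and $\eps > 0$, the set $K$ is contained in some $G_n$; applying almost finiteness of the restricted free action $G_n \curvearrowright X$ yields the required $(K,\eps)$-invariant towers, which are simultaneously towers for $G$. Thus the defining property of $\mathcal{D}$ passes to the limit, as is already noted to be clear from the definition in the introduction.

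The crucial step is closure under extensions on the right by an amenable group, and here the point is precisely that the normal subgroup being extended is always infinite. Suppose $1 \to H \to G \to Q \to 1$ is exact with $H \in \mathcal{D}$ and $Q$ amenable. Then $G$ is an extension of an amenable group by an amenable group, hence amenable, and it is infinite since it contains the infinite group $H$. Let $G \curvearrowright X$ be any free action on a finite-dimensional compact metrizable space. Its restriction $H \curvearrowright X$ is again free and on the same finite-dimensional space, so by $H \in \mathcal{D}$ it is almost finite. Since $H$ is an infinite normal subgroup of $G$, Theorem \ref{MainThm} applies verbatim and yields that $G \curvearrowright X$ is almost finite. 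Hence $G \in \mathcal{D}$, completing the induction.

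I expect the only genuine subtlety to be bookkeeping rather than hard analysis: one must build infiniteness into the inductive hypothesis. Without it, the extension step could a priori present a \emph{finite} normal subgroup $H$, to which Theorem \ref{MainThm} does not apply (and for which the conclusion is not available from \cite{KerSza20} once $Q$ is an arbitrary amenable group rather than finite or $\Zb$). Carrying ``infinite'' inside the definition of $\mathcal{D}$ guarantees that $H$ is infinite at every extension step, and the facts that the base groups are infinite and that both operations preserve infiniteness show that this restriction costs nothing. All of the real dynamical content has thereby been front-loaded into Theorem \ref{MainThm} and the Kerr--Szab\'o theorem, leaving only the formal verification above.
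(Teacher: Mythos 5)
Your proposal is correct and follows essentially the same route as the paper, which likewise deduces the corollary by combining the Kerr--Szab\'o base case, the (clear) closure under direct limits, and Theorem \ref{MainThm} for the extension step. The one point you elaborate beyond the paper's two-line proof --- carrying ``infinite'' through the induction so that the normal subgroup in each extension step is infinite --- is exactly the bookkeeping the paper builds into the statement by starting from \emph{infinite} groups of locally subexponential growth, so your write-up is a faithful, slightly more explicit version of the intended argument.
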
 

\medskip

\noindent{\it Acknowledgements.}
The author is grateful to Robin Tucker-Drob for pointing out Remark \ref{finite-normal-subg-remark} and to the anonymous reviewer for helpful comments. The research was partially funded by the Deutsche Forschungsgemeinschaft (DFG, German Research Foundation) under Germany’s Excellence Strategy – EXC 2044 – 390685587, Mathematics Münster – Dynamics – Geometry – Structure; the Deutsche Forschungsgemeinschaft (DFG, German Research Foundation) – Project-ID 427320536 – SFB 1442, and ERC Advanced Grant 834267 - AMAREC.

\section{Preliminaries}
It would be convenient for us to introduce the following framework.
\begin{definition}
Let $G \curvearrowright X$ be an action of a countable discrete group on a compact metrizable space. The \emph{type semigroup} is the abelian semigroup generated by the open sets $\{\sqbr{U} \colon U \subset X \, \mbox{open}\}$ subject to the relations 
\begin{equation}
\label{semigroup-relations}
    \sqbr{U \sqcup V} = \sqbr{U} + \sqbr{V} \quad \mbox{and} \quad \sqbr{gU} = \sqbr{U}
\end{equation}
for every $g \in G$. 

For open sets $U$ and $V$ we say that $U$ is \emph{dynamically subequivalent} to $V$ and write $U \precsim_G V$ if for every compact subset $K \subset U$ there's an open cover $K \subset \bigcup_{i=1}^nU_i$ and elements $g_1, g_2, \ldots, g_n \in G$ such that $\{g_iU_i\}_{i=1}^n$ are pairwise disjoint subsets of $V$. It is easily seen that $\precsim_G$ is transitive and respects the relations \eqref{semigroup-relations}. Thus, it extends to a partial order on the type semigroup which we again denote by $\precsim_G$.

We say that the action $G \curvearrowright X$ \emph{has comparison} if whenever $U$ and $V$ are open sets such that $\mu(U) < \mu(V)$ for every $G$-invariant measure $\mu$, then
$$U \precsim_G V.$$
Similarly, we'll say that the action \emph{has comparison on multisets} if whenever $\sum_{i=1}^n \sqbr{U_i}$ and $\sum_{j=1}^m \sqbr{V_j}$ are elements in the type semigroup such that $\sum_{i=1}^n \mu(U_i) < \sum_{j=1}^m \mu(V_j)$ for every $G$-invariant measure $\mu$, then we have
$$\sum_{i=1}^n \sqbr{U_i} \precsim_G \sum_{j=1}^m \sqbr{V_j}.$$
\end{definition}

We recall the general definition of almost finiteness, due to Kerr.

\begin{definition}[{\cite[Definition 8.2]{Ker20}}]\label{AFdef}
    Let $G \curvearrowright X$ be an action of a countable discrete amenable group on a compact metrizable space. Fix a (not necessarily invariant) compatible metric on $X$. We say that the action is \emph{almost finite}, if for every finite subset $K \subset G$ and every $\eps > 0$ there exist a finite collection $\{(S_i, V_i)\}_{i=1}^n$ (called a \emph{castle}), where
    \begin{enumerate}[label=(\arabic*)]
    \item for every $i = 1, 2, \ldots, n$ the set $S_i \subset G$ (called \emph{shape}) is finite and $(K, \eps)$-invariant, that is $\abs{S_i \Delta KS_i} < \eps \abs{S_i}$,
    \item for every $i = 1, 2, \ldots, n$ the set $V_i \subset X$ (called \emph{base}) is open,
    \item the open sets $sV_i$ for all possible choices of $i=1,\ldots, n$ and $s \in S_i$ are pairwise disjoint and have diameter smaller than $\eps$,
    \item there are subsets $S_i^\prime \subset S_i$ with $\abs{S_i^\prime} < \eps \abs{S_i}$ such that
    \[
    X \setminus \bigsqcup_{i=1}^n S_iV_i \precsim_G \bigsqcup_{i=1}^n S_i^\prime V_i.
     \]
    \end{enumerate}
\end{definition}

The connection between almost finiteness and comparison has been studied by Kerr and Szab{\'o} and it's not hard to check that the same results hold for comparison on multisets, which we record as follows.

\begin{remark}
If the action $G \curvearrowright X$ is almost finite then it has comparison on multisets. In particular, under the small boundary property (meaning that there's a base for the topology on $X$ with the boundaries being $\mu$-null for every invariant measure $\mu$) comparison, comparison on multisets, and almost finiteness are all equivalent. The proofs are exactly the same as in \cite[Theorem 9.2]{Ker20} and \cite[Theorem A]{KerSza20}.
\end{remark}

\section{Main results}

We are now ready to prove the main result of this paper.

\begin{proof}[Proof of Theorem \ref{MainThm}]
Since $H \curvearrowright X$ is almost finite it has the small boundary property (\cite[Theorem A]{KerSza20}). That implies that $G \curvearrowright X$ also has the small boundary property. Thus, it is sufficient to prove that $G \curvearrowright X$ has comparison. Let $A$ and $B$ be a open sets such that
\begin{equation}
\label{measure-ineq}
\mu(A) < \mu(B)
\end{equation}
for every $G$-invariant measure $\mu$. It follows that there's a sufficiently big F{\o}lner set $F \subset G$ and $\gamma > 0$ such that
\begin{equation}
    \label{averaged-measure-ineq}
    \frac{1}{\abs{F}}\sum_{g \in F}\nu(gA) + \gamma < \frac{1}{\abs{F}}\sum_{g \in F}\nu(gB)
\end{equation}
for all probability measures $\nu \in M(X)$. In particular, that is the case for all $\nu \in M_H(X)$ (the space of all $H$-invariant probability measures on $X$). Denote by $n$ the cardinality $\abs{F}$. 

Since $H \curvearrowright X$ is almost finite and $H$ is infinite, we can find an open castle $\{(V_j, S_j)\}_{j=1}^m$ such that all the shapes $S_i$ have cardinality at least $\frac{4n}{\gamma}$ and 
\begin{equation*}
    \nu\brak{X \setminus \bigsqcup_{j=1}^m S_jV_j} < \frac{\gamma}{4}
\end{equation*}
for every $H$-invariant measure $\nu$. Additionally, by the small boundary property we may assume that $\nu(\partial V_j) = 0$, $\nu(\partial A) = 0$, and $\nu(\partial B) = 0$ for every $\nu \in M_H(X)$. It follows that we can refine each tower in the castle according to the pattern of intersections with $A$ and $B$ and that will not change the $\nu$-measure of the castle for any $H$-invariant measure $\nu$. Thus, we may further assume that every level $hV_j$, $h \in S_j$ is either fully contained in $A$ (respectively $B$) or is disjoint from it. For $j =1, 2,\ldots m$ denote by $a_j$ (respectively $b_j$) the number of levels from the tower $(V_j, S_j)$ that are contained in $A$ (respectively $B$). Denote by $R$ the remainder $X \setminus \bigsqcup_{j=1}^m S_jV_j$ and use \cite[Proposition 3.4]{KerSza20} to find an $\eps$-neighbourhood $R^\eps$ such that 
$$\nu\brak{R^\eps} < \frac{\gamma}{4}$$
for all $\nu \in M_H(X)$.

Now for the action $G \curvearrowright X$ we have 
\begin{equation*}
A \precsim_G \sum_{j=1}^m a_j\sqbr{V_j} + \sqbr{R^\eps} \precsim_G \sum_{j=1}^m n\ceil*{\frac{a_j}{n}}\sqbr{V_j} + \sqbr{R^\eps} = \sum_{j=1}^m \sum_{g \in F} \ceil*{\frac{a_j}{n}}\left[gV_j\right] + \sqbr{R^\eps}.
\end{equation*}
Moreover, as $H$ is a normal subgroup, the action $G \curvearrowright X$ induces an action $G \curvearrowright M_H(X)$ by $\nu \mapsto \nu\brak{g\cdot}$. We shall denote the latter by $g_*\nu$. Thus, for every $H$-invariant measure $\nu$,
\begin{multline*}
    \nu\brak{\sum_{j=1}^m \sum_{g \in F} \ceil*{\frac{a_j}{n}}\sqbr{gV_j} + \sqbr{R^\eps}} - \frac{1}{n}\sum_{g \in F}\nu(gA) < \sum_{j=1}^m \sum_{g \in F} \brak{\ceil*{\frac{a_j}{n}}-\frac{a_j}{n}} g_*\nu(V_j) + \gamma/4 < \gamma/2
\end{multline*}
since for every $g \in G$ and every $\nu \in M_H(X)$
$$\sum_{j=1}^m g_*\nu(V_i) \le \sup\limits_{\nu \in M_H(X)} \sum_{j=1}^m \nu(V_i) \le \frac{1}{\min\limits_{1 \le j \le m}\abs{S_j}} \le \frac{\gamma}{4n}.$$
Similarly, 
\begin{equation*}
    \sum_{j=1}^m \sum_{g \in F} \floor*{\frac{b_j}{n}}\sqbr{gV_i} \precsim_G B
\end{equation*}
and 
\begin{equation*}
    \nu\brak{\sum_{j=1}^m \sum_{g \in F} \floor*{\frac{b_j}{n}}\sqbr{gV_i}} > \frac{1}{n}\sum_{g \in F}\nu(gB) - \gamma/2.
\end{equation*}
for every $\nu \in M^H(X)$. It follows from \eqref{averaged-measure-ineq} that 
$$\nu\brak{\sum_{j=1}^m \sum_{g \in F} \ceil*{\frac{a_j}{n}}\sqbr{gV_j} + \sqbr{R^\eps}} < \nu\brak{\sum_{j=1}^m \sum_{g \in F} \floor*{\frac{b_j}{n}}\sqbr{gV_i}}$$
for every $\nu \in M^H(X)$ and since the action of $H$ has comparison of multisets we have 
$$\sum_{j=1}^m \sum_{g \in F} \ceil*{\frac{a_j}{n}}\sqbr{gV_j} + \sqbr{R^\eps} \precsim_H \sum_{j=1}^m \sum_{g \in F} \floor*{\frac{b_j}{n}}\sqbr{gV_i},$$
and therefore
$$A \precsim_G B.$$
\end{proof}

\begin{remark}
Note that under the same assumptions as in Theorem \ref{MainThm} except that the action $G \curvearrowright X$ is free the proof still yields that it has comparison.
\end{remark}

\begin{proof}[Proof of Corollaries \ref{GxZ} and \ref{MainCor}]
It is known from \cite{KerSza20} (based on the result of \cite{DowZha17}) that all free actions of infinite groups of subexponential growth on finite-dimensional spaces are almost finite. Clearly, direct limits preserve this property and from Theorem \ref{MainThm} extensions on the right do as well. In particular, this applies to the extensions of the form $\Zb \lhd G \times \Zb$.
\end{proof}

\begin{remark}
\label{finite-normal-subg-remark}
When the space $X$ is zero-dimensional every free action of a finite group is almost finite. It follows that all free actions of infinite groups which contain finite normal subgroups of arbitrary large cardinality on zero-dimensional spaces are almost finite (and thus also on finite-dimensional spaces by \cite[Theorem B]{KerSza20}). Indeed, one can proceed as in the proof of Theorem \ref{MainThm} and choose a finite normal subgroup $H$ of sufficiently large cardinality based on the sets $A$ and $B$. 
\end{remark}

\end{document}